\documentclass[letterpaper, 10 pt, conference]{ieeeconf}
\usepackage{ifpdf, flushend}
\usepackage{graphicx}
\usepackage{epstopdf}
\graphicspath{{../}}
\usepackage{multirow}
\usepackage{float}
\usepackage[cmex10]{amsmath}
\usepackage{amssymb}
\usepackage{array}
\usepackage{mdwmath}
\usepackage{mdwtab}
\usepackage{eqparbox}
\usepackage{url}
\usepackage{nomencl}
\usepackage{cite}
\usepackage[ruled,vlined]{algorithm2e}
\usepackage{makeidx}
\usepackage{ifthen}
\usepackage{subcaption}
\usepackage{pdflscape}
\usepackage[colorlinks=true, bookmarks=true]{hyperref}
\AtBeginDocument{%
 \hypersetup{
   citecolor=red,
   linkcolor=blue,  
   urlcolor=blue}}

\makeatletter
\def\munderbar#1{\underline{\sbox\tw@{$#1$}\dp\tw@\z@\box\tw@}}
\makeatother
\def\sw{{\text{sw}}}
\def\ex{{\text{ex}}}
\def\bl{{\text{bl}}}

\newcommand{\mc}[1]{\mathcal{#1}}

\newtheorem{lemma}{Lemma}
\newtheorem{theorem}{Theorem}

\IEEEoverridecommandlockouts    
\title{\LARGE \bf
Efficient Graph Partitioning under Resource Constraints: \\ A Cutting-Plane Framework for Distribution Grids
}

\author{Duong Thuy Anh Nguyen, Harsha Nagarajan, Robert Ferrando, Russell Bent, David Fobes
\thanks{D. Nguyen (\href{mailto:dtnguy52@asu.edu}{dtnguy52@asu.edu}) is with the  School of Electrical, Computer and Energy
Engineering, Arizona State University, Tempe, AZ, USA. H. Nagarajan (\href{mailto:harsha@lanl.gov}{harsha@lanl.gov}) and R. Bent (\href{mailto:rbent@lanl.gov}{rbent@lanl.gov}) are with Applied Mathematics and Plasma Physics (T-5), Los Alamos National Laboratory (LANL), Los Alamos, NM, USA.
R. Ferrando (\href{mailto:rferrando@lanl.gov}{rferrando@lanl.gov}) and D. Fobes (\href{mailto:dfobes@lanl.gov}{dfobes@lanl.gov}) are with Computational Intelligence and Modeling (A-1), LANL, Los Alamos, NM, USA. 
This work was supported by the U.S. Department of Energy, Office of Electricity's Microgrid Research and Development (R\&D) Program under the ``Dynamic Microgrids for Large-Scale DER Integration and Electrification (DynaGrid)'' project, and LANL's Lab Directed R\&D program under the project ``20230091ER: Learning to Accelerate Global Solutions for Non-convex Optimization''.
}
}
\begin{document}
\maketitle

\begin{abstract}
This paper presents an optimal network topology control framework using cutting-plane methods for efficient network partitioning with controllable edges. The objective is to enable real-time reconfiguration of interconnected sub-networks while ensuring radial connectivity, resource feasibility, and structured leader allocation, which are essential for distributed control, stability, and coordination. The problem is formulated as a mixed-integer program that integrates graph-theoretic constraints, resource flow, and network structural properties to enforce an operational hierarchy. To address the combinatorial complexity of cycle elimination and leader assignment, we propose an iterative cutting-plane framework that ensures convergence to an optimal and feasible network topology. Theoretical guarantees on optimality preservation, feasibility, and convergence are established, ensuring systematic elimination of infeasible configurations while maintaining distributed controllability. Simulations on a modified Iowa 240-bus power distribution grid demonstrate the framework's effectiveness in network reconfiguration under resource constraints. The approach achieves median and best-case speedups of 57.5$\times$ and over 64$\times$ in a 46-switch configuration, highlighting its applicability to other networked control systems.
\end{abstract}

\printnomenclature
\allowdisplaybreaks

\section{Introduction} \label{intro}
Network partitioning has long been recognized as a fundamental strategy of managing complexity in large-scale, interconnected systems such as power grids \cite{chen2022distribution}, multi-agent systems \cite{nguyen2023acceleratedabpushpull}, and sensor networks \cite{akyildiz2002wireless}. By partitioning a global network into smaller, self-contained subnetworks, operators can localize control, optimize resource distribution, and effectively isolate critical infrastructure in response to emergencies or system overloads \cite{baran1989optim, Nguyen2024Cluster, gallager2013data}. Motivated by the broad applicability of these strategies, this paper focuses on the optimal network partitioning problem.

Despite the wide-ranging applications, achieving both optimality and scalability remains a fundamental challenge due to the \textit{combinatorial explosion} from interdependent \textit{topological} and \textit{operational} decisions. In power distribution grids, ensuring each microgrid (a self-contained small-scale grid) remains acyclic (radial) leads to an exponential number of constraints even for moderate-sized systems \cite{ImportanceOfRadiality,lei2020radiality,ahmed2008cycle,yang2019towards}. Similarly, in multi-agent networks, forming stable coalitions that satisfy task requirements and communication constraints is known to be NP-hard \cite{Bistaffa2017Coalition,accikmecse2012observers}. While heuristic clustering protocols \cite{akyildiz2002wireless, Younis2004HEED} reduce computational overhead; they often lack feasibility and performance guarantees.

A common approach is to formulate a monolithic mixed-integer program (MIP) that explicitly encodes all constraints. While such monolithic formulations guarantee optimality in theory, they become computationally intractable as network size increase \cite{chen2020strong}. Enforcing topology properties like radiality can require an exponential number of cycle-prevention constraints \cite{padberg1985branch}, and integrating resource allocation constraints further increases complexity \cite{chen2020strong, MergingMicrogrids,moring2024robust}. Early reconfiguration methods \cite{baran1989optim, merlin1975search} and subsequent advances in multi-commodity flow \cite{lei2020radiality} or sub-tour elimination \cite{ahmed2008cycle} remain limited by exponential growth in binary variables \cite{Bistaffa2017Coalition, padberg1985branch}.

Given this intractability, metaheuristic local search algorithms (e.g., genetic algorithms \cite{coello2007evolutionary}, particle swarm or local search) are often employed. However, these methods often rely on problem-specific parameter tuning and heuristics without rigorous optimality or feasibility guarantees for complex constraints. Consequently, there remains a critical need for \textit{scalable} methods that are either \textit{exact} or \textit{provably near-optimal}, and can systematically integrate the full spectrum of constraints in modern network problems.

In recent years, \textit{decomposition-based} methods, particularly cutting-plane algorithms, have garnered significant interest for high-dimensional MIPs \cite{magnanti1984networks, padberg1985branch, chen2020strong}. By generating constraints only to exclude detected infeasibilities, such as cycles or subnetworks with insufficient resources, iterative frameworks often achieve substantial computational speed-ups compared to monolithic formulations \cite{benders1962partitioning, magnanti1984networks, contreras2011benders}. However, their application to \textit{integrated network partitioning} with complex inter-dependencies between topological and operational constraints remains relatively underexplored.

Motivated by these limitations, this paper introduces a \textit{novel cutting-plane framework for fast, optimal network partitioning} that integrates topological, resource, and hierarchical constraints. To address the combinatorial complexity of these requirements, we present a comprehensive set of mixed-integer linear constraints, leveraging a directed multi-commodity flow-based approach \cite{lei2020radiality} to enforce radiality and a coloring-based method \cite{fobes2022optimal} to ensure the correct assignment of hierarchy within subnetworks. Although these formulations provide a rigorous mathematical foundation, their direct implementation results in computationally expensive optimization, especially as network size increases.
Our key contribution is an iterative cutting-plane algorithm that enforces these constraints dynamically. We develop \textit{novel cutting-plane constraints (or cuts)} that systematically and efficiently refine the solution space. The algorithm begins by solving a relaxed partitioning model. If infeasibilities are detected, appropriate cutting-plane constraints are dynamically introduced in subsequent iterations. Notably, we formally establish theoretical guarantees that the proposed algorithm converges to the same optimal solution as the full monolithic MIP but with significantly reduced computational effort.

Contributions are summarized as follows. \textit{(i)} We introduce a cutting-plane framework for optimal network partitioning integrating graph-theoretic constraints, resource flow, and structured leader allocation to enable reconfiguration while ensuring topological and operational feasibility. \textit{(ii)} We develop novel iterative cutting-plane constraints that dynamically enforce cycle elimination and leader assignment, significantly reducing computational complexity versus monolithic MIPs. \textit{(iii)} We theoretically guarantee optimality, feasibility, and convergence. \textit{(iv)} We validate the framework via simulations on an Iowa 240-bus power systems application.

\section{Network Model} \label{sec:Network}
Consider a network represented as a directed graph $\mathbb{G} = (\mathcal{N}, \mathcal{E})$, where $\mathcal{N} = \{1, 2, \dots, N\}$ is the set of nodes, representing individual components such as devices, agents, subsystems, or entities. The set of edges $\mathcal{E} \subseteq \mathcal{N} \times \mathcal{N}$ represents the connections between nodes, which may include communication links, physical connections, power lines, or transportation pathways. Each edge $(i, j) \in \mathcal{E}$ is oriented from node $i$ to node $j$, indicating the permissible direction of flow, whether it be data, power, or other resources. A subset of $\mathcal{E}$, denoted by $\mathcal{E}^{\mathrm{sw}}\subseteq \mathcal{E}$, comprises the \textbf{controllable edges} whose open/closed state can be selected to partition the network into sub-networks. An undirected network is a special case of this formulation, where each connection is bidirectional. The network structure is illustrated in Fig~\ref{fig:network}.

\begin{figure}[t!]
    \vspace{.2cm}
   \centering
   \includegraphics[width=0.49\textwidth]{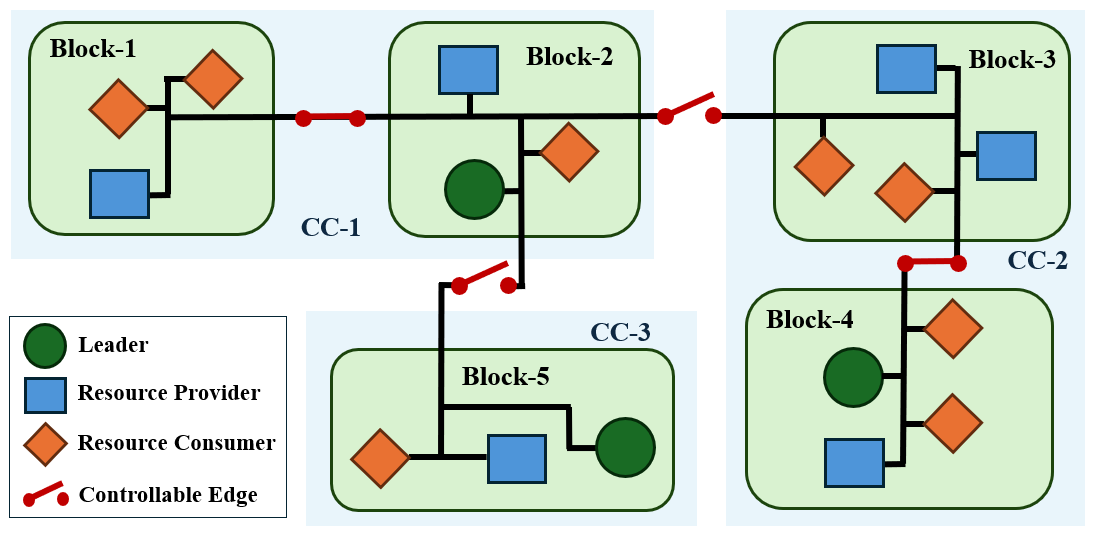}
   \caption{Prototype network.}
   \label{fig:network}
\end{figure}

This paper addresses optimal network reconfiguration by controlling $\mathcal{E}^{\mathrm{sw}}$ to achieve target topology and functionality. Such adaptive partitioning allows the network to transition between topological states, which is is critical for power grids, multi-agent systems, and distributed computing.


\subsection{Base Blocks}
Define $\mathcal{B}$ as the set of disjoint \textit{base blocks} $\ell \in \mathcal{B}$. Each block represents the smallest predefined sub-network, a subset of nodes that remains connected when all controllable edges are open. 
To model the activation state of each base block $\ell \in \mathcal{B}$, we introduce the binary decision variable:
\begin{align}
   z^{\mathrm{bl}}_{\ell} =
   \begin{cases}
   1, & \text{if block } \ell \in \mathcal{B} \text{ is active}, \\
   0, & \text{otherwise}.
   \end{cases}
\end{align}
Here, $z^{\mathrm{bl}}_{\ell} = 1$ indicates that the constituent nodes of block $\ell$ are operational and functional within the network.

\subsection{Controllable Edges}
The subset $\mathcal{E}^{\sw} \subseteq \mathcal{E}$ denotes \textit{controllable edges} (or ``switches") that enable network reconfiguration. For each controllable edge $(i, j) \in \mathcal{E}^{\mathrm{sw}}$, we define a decision variable:
\begin{align}
    z_{ij}^{\mathrm{sw}}
    =
    \begin{cases}
      1, & \text{if the edge is closed (active)}, \\
      0, & \text{if the edge is open}.
    \end{cases}
\end{align}

Closing a switch ($z_{ij}^{\mathrm{sw}} = 1$) merges base blocks into larger \textit{connected components}, defined as maximal subsets of nodes where every pair is linked by a sequence of active edges. Conversely, an open switch ($z_{ij}^{\mathrm{sw}} = 0$) prevents interaction between blocks, preserving their independent operation. This control over $\mathcal{E}^{\mathrm{sw}}$ enables \textit{adaptive network partitioning} for objectives such as fault isolation and load balancing in power grids, or dynamic coalition formation in multi-agent systems.

\subsection{Node Roles and Leader Nodes}
We partition the node set $\mathcal{N}$ into three disjoint subsets: resource providers $\mathcal{P}$, resource consumers $\mathcal{M}$, and intermediary nodes $\mathcal{I}$. Providers supply essential resources (e.g., power, data), consumers utilize them, and intermediaries offer auxiliary routing or communication functionality. By construction, these sets satisfy
\begin{align}
   \mathcal{N} = \mathcal{P} \cup \mathcal{M} \cup \mathcal{I}, \quad \mathcal{P} \cap \mathcal{M} = \mathcal{P} \cap \mathcal{I} = \mathcal{M} \cap \mathcal{I} = \varnothing.
\end{align}
Similarly, for each base block $\ell \in \mathcal{B}$, the local node set $\mathcal{N}_{\ell}$ is partitioned into $\mathcal{P}_{\ell}$, $\mathcal{M}_{\ell}$, and $\mathcal{I}_{\ell}$.

Among the resource providers $\mathcal{P}$, certain nodes are designated as \textit{leaders} to manage essential control and coordination functions. For example, a leader may represent a grid-forming inverter in distribution grids, a primary router in communication systems, or a coordination hub in multi-agent networks. We introduce a binary decision variable $z_{g}^{\mathrm{ldr}}$ for each $g \in \mathcal{P}$, where $z_{g}^{\mathrm{ldr}} = 1$ if node $g$ is designated as a leader. The set of active leader nodes is then defined as:
\begin{align}
\mathcal{L} = \{ g \in \mathcal{P} \mid z_{g}^{\mathrm{ldr}} = 1 \}.
\end{align}

\section{Optimal Network Partitioning Problem} \label{sec:prob}

This section presents a high-level formulation of the \textit{Optimal Network Partitioning Problem}, which governs the activation of controllable edges and blocks. Given a network defined in Section~\ref{sec:Network}, the goal is to partition the network into subnetworks that optimize the system objectives while satisfying structural and operational constraints.

Let $\boldsymbol{x} = (\boldsymbol{z}^{\mathrm{sw}}, \boldsymbol{z}^{\mathrm{bl}}, \boldsymbol{z}^{\mathrm{ldr}}, \boldsymbol{r}^{\mathrm{gen}}, \boldsymbol{r}^{\mathrm{flow}}, \dots)$ denote the decision vector. This includes binary variables for topology and leadership status, as well as continuous variables for resource generation, distribution, and other domain-specific parameters such as energy flows in power networks, bandwidth allocation in communication systems, or task scheduling in multi-agent coordination. The objective function, $\mathcal{O}(\boldsymbol{x})$, captures domain-specific goals such as minimizing operational costs, balancing resource loads, or maximizing service coverage. The general optimization problem is formulated as:
\begin{align} \label{eq-OptimalNetworkPartitioningProblem}
   \min_{\boldsymbol{x}} \quad & \mathcal{O}(\boldsymbol{x}) \\
   \text{s.t.} \quad & \mathcal{C}^1(\boldsymbol{x}) ~ \text{(network configuration constraints)},  \nonumber\\
   & \mathcal{C}^2(\boldsymbol{x}) ~ \text{(resource and capacity constraints)}, \nonumber\\
   & \mathcal{C}^3(\boldsymbol{x}) ~ \text{(operational and functional role constraints)}, \nonumber\\
   & \mathcal{C}^4(\boldsymbol{x}) ~ \text{(domain-specific constraints)}, \nonumber\\
   & \mathcal{C}^5(\boldsymbol{x}) ~ \text{(variable domains constraints)}. \nonumber
\end{align}
The feasible region $\Xi = \{\mathcal{C}^1, \dots, \mathcal{C}^5\}$ collectively ensures a valid and functional network partition.

\section{Network Partitioning Feasibility Constraints}
\label{sec:constraints}

This section defines the feasibility constraints for valid network partitioning, including connectivity, resource allocation, and hierarchical coordination. Because these requirements introduce significant combinatorial complexity in large-scale systems, directly enforcing them in the optimization model is often computationally impractical. To address this, we propose an iterative cutting-plane framework that selectively enforces constraints to dynamically refine the feasible space, ensuring computational efficiency and convergence to an optimal solution.

\subsection{Network Configuration Constraints}
To ensure network efficiency and stability, each CC must satisfy structural requirements, most notably \textit{radiality}. Enforcing a tree-like topology (no cycles) is essential in power distribution and multi-agent systems to prevent operational instability and redundant loops. 

For a given configuration $\boldsymbol{z}^{\mathrm{sw}}$, each CC $\ell \in \mathcal{C}(\boldsymbol{z}^{\mathrm{sw}})$ must form a spanning tree. Define $\mathcal{N}_{\ell}$ as the set of all nodes within the CC $\ell$, and let $\mathcal{E}_{\ell}$ denote the set of edges that belong to CC $\ell$. Following \cite{lei2020radiality}, we adopt a directed multi-commodity flow-based formulation. We designate a reference node $i_r \in \mathcal{N}_{\ell}$ as the root. For every other node $k \in \mathcal{N}_{\ell} \setminus \{i_r\}$, we introduce a fictitious commodity $k$ and flow variables $f^k_{ij}$. The binary variable $\lambda_{ij}$ indicates if arc $(i, j)$ is part of the directed spanning tree, while $\beta_{ij}$ (corresponding to the switches $z_{ij}^{\mathrm{sw}}$) denotes the undirected edge status ($\beta_{ij} = 1$ if active). The following constraints enforce radiality for each $\ell \in \mathcal{C}(\boldsymbol{z}^{\mathrm{sw}})$.

\subsubsection{Flow Conservation}
Each commodity $k$ must originate at the root $i_r$, pass through intermediate nodes, and be consumed exactly at node $k$:
\begin{align}
   \sum_{j:(j, i) \in \mathcal{E}_{\ell}} f^{k}_{ji} - \sum_{j:(i, j) \in \mathcal{E}_{\ell}} f^{k}_{ij} = 
   \begin{cases} 
   -1, & \text{if } i = i_r \\
   1, & \text{if } i = k \\
   0, & \text{otherwise}
   \end{cases} \label{eq:radial_flow_conservation}
\end{align}

\subsubsection{Topological Consistency}
Flow can only exist on active directed arcs, and the total number of active edges must satisfy the tree property:
\begin{align}
   &0 \leqslant f^{k}_{ij} \leqslant \lambda_{ij}, \quad \forall (i,j) \in \mathcal{E}_{\ell}, k \in \mathcal{N}_{\ell} \setminus \{i_r\} \label{eq:radial_flow_limit} \\
   &\sum_{(i,j) \in \mathcal{E}_{\ell}} (\lambda_{ij} + \lambda_{ji}) = |\mathcal{N}_{\ell}| - 1 \label{eq:radial_spanning_tree} \\
   &\lambda_{ij} + \lambda_{ji} = \beta_{ij}, \quad \forall (i, j) \in \mathcal{E}_{\ell} \label{eq:radial_directional_edge}
\end{align}
where $\lambda_{ij}, \lambda_{ji}, \beta_{ij} \in \{0, 1\}, \forall (i, j) \in \mathcal{E}_{\ell}$.

\subsection{Resource Allocation Constraints}
To guarantee operational feasibility, the network must balance resource supply and demand across all active blocks while respecting generation and flow capacities.

\subsubsection{Resource Balance}
For each base block $\ell \in \mathcal{B}$, the net resource flow must equal the total generated resources minus the active consumer demand. Let $r_{ij}^{\mathrm{flow}}$ be the resource flow on edge $(i, j)$, $r_g^{\mathrm{gen}}$ the resource generated by provider $g$, and $D_m$ the demand of consumer $m$. The block-level balance equation is:
\begin{align}
\sum_{(i, j) \in \mathcal{E}_{\ell}} r_{ij}^{\mathrm{flow}} = \sum_{g \in \mathcal{P}_{\ell}} r_g^{\mathrm{gen}} - z_{\ell}^{\mathrm{bl}} \sum_{m \in \mathcal{M}_\ell} D_m, ~~ \forall \ell \in \mathcal{B}.
\end{align}
This formulation ensures that consumer demand is only enforced when the block is active ($z_{\ell}^{\mathrm{bl}} = 1$).

\subsubsection{Generation Capacity}
Provider nodes $g \in \mathcal{P}_{\ell}$ can only generate resources within their physical limits ($C_g^{\min}$, $C_g^{\max}$), and strictly when their containing block $\ell$ is active:
\begin{align}
z_{\ell}^{\mathrm{bl}} C_g^{\min} \leqslant r_g^{\mathrm{gen}} \leqslant z_{\ell}^{\mathrm{bl}} C_g^{\max}, \quad \forall g \in \mathcal{P}_{\ell}, \forall \ell \in \mathcal{B}.
\end{align}

\subsubsection{Flow Limits}
Resource flows must respect the maximum line capacities $R_{ij}^{\max}$. For fixed infrastructure links ($\mathcal{E} \setminus \mathcal{E}^{\mathrm{sw}}$), flow is continuously bounded. For controllable edges ($\mathcal{E}^{\mathrm{sw}}$), flow is strictly prohibited unless the switch is closed ($z_{ij}^{\mathrm{sw}} = 1$):
\begin{align}
   |r_{ij}^{\mathrm{flow}}| &\leqslant R_{ij}^{\max}, \quad &&\forall (i, j) \in \mathcal{E} \setminus \mathcal{E}^{\mathrm{sw}}, \label{eq:flow_fixed}\\
   |r_{ij}^{\mathrm{flow}}| &\leqslant z_{ij}^{\mathrm{sw}} R_{ij}^{\max}, \quad &&\forall (i, j) \in \mathcal{E}^{\mathrm{sw}}. \label{eq:flow_switched}
\end{align}

\subsection{Block Status Constraints}
To guarantee that base blocks connected by a closed controllable edge share the same activation state, we impose:
\begin{align}
   |z^{\mathrm{bl}}_{i} - z^{\mathrm{bl}}_{j}| &\leqslant 1 - z^{\mathrm{sw}}_{ij} , \quad \forall (i, j) \in \mathcal{E}^{\mathrm{sw}}.
   \label{eq:cc_consistency}
\end{align}

\subsection{Leader Constraints in Subnetworks}
In many applications, regulating specific node types within each subnetwork ensures system feasibility, stability, and operational efficiency. This arises in power systems (grid-forming inverters), communication networks (data aggregation points), and multi-agent coordination (leader-follower architectures). In this work, we impose such constraints on leader nodes, requiring that every active CC, defined as a maximal set of active base blocks mutually linked by closed switches, contains at least one and at most $\kappa \geqslant 1$ leaders. 

For a given configuration $\boldsymbol{z}^{\mathrm{sw}}$, each CC $c \in \mathcal{C}(\boldsymbol{z}^{\mathrm{sw}})$ with activation state $z^{\mathrm{CC}}_c$ must satisfy:
\begin{align}
z^{\mathrm{CC}}_c \leqslant \sum_{g \in \mathcal{P}_c} z^{\mathrm{ldr}}_g \leqslant \kappa \cdot  z^{\mathrm{CC}}_c, \quad \forall c \in \mathcal{C}(\boldsymbol{z}^{\mathrm{sw}}).
\end{align}

Enforcing this constraint directly is challenging because the CC topology $\mathcal{C}(\boldsymbol{z}^{\mathrm{sw}})$ is endogenous to the optimization. To decouple the topology, we adapt a coloring scheme \cite{fobes2022optimal,moring2024robust} combined with flow-based connectivity verification.

\subsubsection{Coloring-Based Leader Detection} We introduce a binary variable $y^{\ell}_{ij} \in \{0, 1\}$ indicating whether switch $(i,j) \in \mathcal{E}^{\mathrm{sw}}$ is ``colored" by base block $\ell \in \mathcal{B}$. 

Coloring is restricted to closed switches, and a colored switch implies the presence of an appropriate number of leaders in its associated block:
\begin{align} 
y^{\ell}_{ij} &\leqslant z^{\mathrm{sw}}_{ij}, \quad &&\forall \ell \in \mathcal{B}, \forall (i,j) \in \mathcal{E}^{\mathrm{sw}}, \label{eq:coloring_assignment}\\ 
y^{\ell}_{ij} &\leqslant \sum_{g \in \mathcal{P}_{\ell}} z^{\mathrm{ldr}}_g, \quad &&\forall \ell \in \mathcal{B}, \forall (i,j) \in \mathcal{E}^{\mathrm{sw}}. \label{eq:coloring_consistency} 
\end{align}
Furthermore, if a closed switch is colored by block $\ell$, the block's leader count is strictly bounded:
\begin{align}
   y^{\ell}_{ij} - \big(1 - z^{\mathrm{sw}}_{ij}\big)
   \leqslant
   \sum_{g \in \mathcal{P}_{\ell}} z^{\mathrm{ldr}}_g
   &\leqslant
   \kappa
   \big(
   y^{\ell}_{ij} + \big(1 - z^{\mathrm{sw}}_{ij}\big)
   \big),\nonumber\\
   &\forall \ell \in \mathcal{B},~
   \forall (i,j) \in \mathcal{E}^{\mathrm{sw}}.
   \label{eq:leader_participation}
\end{align}
To maintain CC identification, all closed switches within the same CC must share consistent coloring for any block $\ell'$. For any two distinct switches $(a,b)$ and $(d,c)$ in the same CC, we have $\forall \ell, \ell' \in \mathcal{B},~
   \forall (a,b)\neq (d,c)\in \mathcal{E}^{\mathrm{sw}}:$
\begin{align}
   \left\{
   \begin{array}{l}
       y^{\ell'}_{dc} - \big[(1 - z^{\mathrm{sw}}_{dc}) + (1 - z^{\mathrm{sw}}_{ab})\big]
       \;\leqslant\; y^{\ell'}_{ab}, \\
       y^{\ell'}_{ab} \;\leqslant\;
       y^{\ell'}_{dc} + \big[(1 - z^{\mathrm{sw}}_{dc}) + (1 - z^{\mathrm{sw}}_{ab})\big]
   \end{array}
   \right. .
   \label{eq:coloring_sync}
\end{align}
Finally, block activation is dependent on either containing a local leader or connecting to an active CC:
\begin{align}
   z^{\mathrm{bl}}_{\ell} \leqslant \sum_{g \in \mathcal{P}_{\ell}} z^{\mathrm{ldr}}_g + \sum_{(i,j) \in \mathcal{E}^{\mathrm{sw}}_{\ell}} \sum_{\ell' \in \mathcal{B}} y^{\ell'}_{ij}, \quad \forall \ell \in \mathcal{B}.
   \label{eq:block_activation_leader}
\end{align}

\subsubsection{Flow-Based Verification of Leader Connectivity}
Let $\mathcal{E}^{v}_{\ell}$ be the set of virtual edges directed from block $\ell$ to all other blocks $\ell'$. We define $\xi^{\ell}_{ij} \in [0,1]$ as the flow across virtual edge $(i,j)$, and $\eta^{\ell}_{ij}$ as the flow magnitude across a closed switch $(i,j) \in \mathcal{E}^{\mathrm{sw}}_{\ell}$. The flows are constrained by switch capacity and network balance:
\begin{align}
&- z^{\sw}_{ij} |\mc{E}^{\sw}_{\ell}| \leqslant \eta^{\ell}_{ij} \leqslant z^{\sw}_{ij} |\mc{E}^{\sw}_{\ell}|, ~ \forall (i,j) \in \mc{E}^{\sw}_{\ell}, \forall \ell \in \mc{B}, \! \label{eq:gfder_flow_limitation_switches}\\
&\!\sum_{\substack{(i,j) \in \mc{E}^{\sw}_{\ell} \\ i = \ell}} \!\eta^{\ell}_{ij} - \!\!\! \sum_{\substack{(i,j) \in \mc{E}^{\sw}_{\ell} \\ j = \ell}} \!\eta^{\ell}_{ij} +\!\!\! \sum_{(i,j) \in \mc{E}^{v}_{\ell}} \!\xi^{\ell}_{ij} = |\mc{B}| - \!1, \forall \ell \in \mc{B}, \! \label{eq:leader_flow_balance_within} \\
&\!\sum_{\substack{(i,j) \in \mc{E}^{\sw}_{\ell} \\ i = \ell'}} \!\eta^{\ell}_{ij} - \!\!\sum_{\substack{(i,j) \in \mc{E}^{\sw}_{\ell} \\ j = \ell'}} \!\eta^{\ell}_{ij} - \xi^{\ell}_{\ell\ell'} = -1,  \forall \ell' \neq \ell, \forall \ell \in \mc{B}. \!\! \label{eq:leader_flow_balance_distinct}
\end{align}

Virtual flow restricts the coloring assignment, ensuring that if flow exists on the virtual line between $\ell$ and $\ell'$, the corresponding switches cannot be colored by $\ell$:
\begin{align} y^{\ell}_{ij} \leqslant 1 - \xi^{\ell}_{\ell\ell'}, \quad \forall \ell' \neq \ell, \quad \forall (i,j) \in \mc{E}^{\sw}_{\ell'}, \quad \forall \ell \in \mc{B} \label{eq:leader_flow_restriction_virtual}
\end{align}
\subsubsection{Leader Constraints within Individual Blocks}
Isolated active blocks must independently satisfy the leader bounds:
\begin{align} z^{\mathrm{bl}}_{\ell} - \!\sum_{(i,j) \in \mathcal{E}^{\mathrm{sw}}_{\ell}} \!\!\! z^{\mathrm{sw}}_{ij} \leqslant \sum_{g \in \mathcal{P}_{\ell}} z^{\mathrm{ldr}}_g \leqslant \kappa \cdot z^{\mathrm{bl}}_{\ell}, \quad \forall \ell \in \mathcal{B}. \label{eq:block_leader_limit} 
\end{align}

\section{Iterative Constraint Enforcement via Cutting-Plane Method} \label{sec:algo}
This section introduces a scalable framework for optimal network partitioning. Since directly embedding combinatorial constraints in a monolithic MIP is computationally intractable (sec. \ref{sec:constraints}), we propose a dynamic constraint enforcement strategy that reduces the computational burden while preserving optimality. 

\subsection{Algorithmic Framework}
The cutting-plane method is an iterative refinement procedure devised to dynamically enforce complex constraints. To formally describe the procedure, let $\Omega$ denote the original optimization problem, defined over a feasible region constrained by a full set of conditions $\Xi$. Initially, we construct a relaxed variant of this problem, denoted as $\Omega^{(0)}$, in which a subset of constraints is omitted, resulting in the relaxed constraint set $\Xi_\text{relax}$.
Specifically, $\Xi_\text{relax}$ retains the computationally tractable operational requirements (such as resource balance, flow limits, generation capacities, and block status constraints) while the complex combinatorial constraints for radiality and leader allocation are deferred to the iterative phase.
The relaxed problem is then solved to obtain a candidate solution, denoted as $\theta^{(0)}$. 

If $\theta^{(0)}$ satisfies all constraints in $\Xi$, it is deemed optimal. Otherwise, the violated constraints, denoted as $\Xi_\text{viol}^{(0)} \subseteq \Xi$, are identified. A corresponding set of cutting planes, denoted as $\Lambda^{(0)}$, is generated to exclude the infeasible solution while preserving feasibility. The optimization problem is then updated as $\Omega^{(1)} = \Omega^{(0)} \cup \Lambda^{(0)}$, and the process iterates until a solution satisfying all constraints is obtained. This iterative procedure is outlined in Algorithm \ref{alg:cutting_plane}.

\begin{algorithm}[t!]
\caption{Iterative Cutting--Plane Method}
\label{alg:cutting_plane}
\SetAlgoLined
\hspace{-.5em}\KwIn{Optimization problem $\Omega$ with constraint set $\Xi$}
\hspace{-.5em}\KwOut{Optimal solution $\theta^{*}$}
\hspace{-.5em}Construct relaxed problem $\Omega^{(0)}$ with constraints $\Xi^{(0)}_\text{relax}$\;
\hspace{-.5em}Initialize iteration index $\tau \gets 0$\;
\Repeat{a solution satisfying all constraints is obtained}{
   Solve $\Omega^{(\tau)}$ to obtain candidate solution $\theta^{(\tau)}$\;
   \eIf{$\theta^{(\tau)}$ satisfies all constraints in $\Xi$}{
       \Return $\theta^{(\tau)}$ \footnotesize{\tcp{Optimal solution found}}
   }{
       Identify violated constraints $\Xi_\text{viol}^{(\tau)} \subseteq \Xi$\;
       Generate cutting planes $\Lambda^{(\tau)}$ to eliminate $\theta^{(\tau)}$ \!\!\;\!\!
       Update problem: $\Omega^{(\tau+1)} \gets \Omega^{(\tau)} \cup \Lambda^{(\tau)}$\;
   }
   $\tau \gets \tau + 1$\;
}
\end{algorithm}

\subsection{Cycle Elimination via Cutting-Planes}
\label{subsec:cycle_cuts}

Directly embedding \eqref{eq:radial_flow_conservation}--\eqref{eq:radial_directional_edge} is computationally expensive, we initially relax them and enforce radiality dynamically.

\subsubsection{Cycle Detection}
In each iteration, the candidate topology defined by the binary switch variables $z^{\sw}_{ij}$ is inspected using standard graph-theoretic algorithms (e.g., depth-first search or union-find). If any cycles are identified, we systematically generate valid inequalities to eliminate them. 

\subsubsection{Valid Cuts for Cycle Elimination}
Let $C_{\text{cycle},c} \subseteq \mathcal{E}^{\sw}$ denote the set of controllable switches forming a detected cycle $c \in \{1, \dots, C\}$. To break the cycle without removing valid radial topologies, we impose the following constraint:
\begin{align}
   \sum_{(i, j) \in C_{\text{cycle}, c}} z_{ij}^{\sw}
   \;\leqslant\; |C_{\text{cycle}, c}| - 1,
   \quad \forall c = 1,\dots,C.
   \label{eq:radiality_cut_multiple}
\end{align}
Enumerating all cycles in a graph is generally NP-hard. We restrict constraint enforcement to a fundamental cycle basis, strictly bounded by $|\mathcal{E}| - |\mathcal{N}| + |\mathcal{K}|$ cycles, where $|\mathcal{K}|$ denotes the number of CCs, to guarantee radiality in polynomial time.




\subsection{Leader Allocation via Cutting-Planes}
\label{subsec:gf_cuts}
Directly enforcing the full leader constraints \eqref{eq:coloring_assignment}--\eqref{eq:leader_flow_restriction_virtual} is computationally demanding, we initially relax them and enforce the leader bounds iteratively.

\subsubsection{Leader Violation Detection}
Given a candidate solution $(z_{l}^{\mathrm{bl},*}, {z^{\sw,*}_{ij}}, {z_{g}^{\mathrm{ldr},*}})$, we identify any active CC $\mathcal{C}$ that is isolated (i.e., all external switches open) and internally connected, yet violates the bounds $1 \leqslant \sum_{g\in \mathcal{L}_{\mathcal{C}}} z^{\mathrm{ldr}}_{g} \leqslant \kappa$, where $\mathcal{L}_{\mathcal{C}}$ is the candidate leaders in $\mathcal{C}$. Such violations trigger the generation of cutting planes to exclude the infeasible candidate solution.

\subsubsection{Valid Cuts for Leader Requirements}
Let $S_{\mathcal{C}}^{\sw,\ex}$ denote the set of external switches around $\mathcal{C}$, $S_{\mathcal{C}}^{\sw,\text{in}}$ denote the internal switches within $\mathcal{C}$, and $S_{\mathcal{C}}^{\bl,\text{in}}$ denote the set of base blocks in $\mathcal{C}$. For every CC $\mathcal{C}$, the following constraints establish the bounds for the number of leaders in CC $\mathcal{C}$:
\vspace{-0.25cm}
\begin{subequations}
\begin{align}
\Phi(\mathcal{C}) \;:=\;& 
    \prod_{(i,j)\,\in\, S_{\mathcal{C}}^{\mathrm{sw,ex}}} \!\!(1 - z_{ij}^{\mathrm{sw}}) 
    \cdot \prod_{(i,j)\,\in\, S_{\mathcal{C}}^{\mathrm{sw,in}}} \!\! z_{ij}^{\mathrm{sw}} 
    \cdot \prod_{\ell\,\in\, S_{\mathcal{C}}^{\mathrm{bl,in}}} z_{\ell}^{\mathrm{bl}}, 
    \nonumber \\
\sum_{g \in \mathcal{L}_\mathcal{C}} z_g^{\mathrm{ldr}} 
    &\;\geqslant \; \Phi(\mathcal{C}), 
    \label{eq:lb_leader_req} \\
\sum_{g \in \mathcal{L}_\mathcal{C}} z_g^{\mathrm{ldr}} 
    &\;\leqslant \; |\mathcal{L}_\mathcal{C}| \;+\; 
    \big(\kappa - |\mathcal{L}_\mathcal{C}|\big)\cdot \,\Phi(\mathcal{C}).
    \label{eq:ub_leader_req}
\end{align}
\label{eq:leader_req_nonlin}
\end{subequations}

\subsubsection{Linearization of \eqref{eq:leader_req_nonlin}}
Constraints \eqref{eq:leader_req_nonlin} contain products of binary variables, which can be linearized exactly using standard auxiliary-variable formulations. Instead, we exploit their special structure and use the following linear bounds, which are exact without additional variables:
\begin{subequations}
\begin{align}
    \sum_{g \in \mc{L}_{\mc{C}}} z^{\mathrm{ldr}}_g \geqslant &\sum_{(i, j) \in S_{\mc{C}}^{\sw,\ex}} (1-z^{\sw}_{ij}) + \sum_{(i,j) \in S_\mathcal{C}^{\mathrm{sw,in}}} z_{ij}^{\mathrm{sw}} +\nonumber \\
    &\sum_{\ell \in S_{\mc{C}}^{\bl,\text{in}}} z^{\bl}_{\ell}   - |S_{\mc{C}}^{\sw,\ex}| -|S_{\mc{C}}^{\sw,\text{in}}| -  |S_{\mc{C}}^{\bl,\text{in}}| + 1, \label{eq:inv_cut_lower_bound}\\
    \sum_{g \in \mc{L}_{\mc{C}}} z^{\mathrm{ldr}}_g \leqslant \  & \kappa + \left( |\mc{L}_{\mc{C}}| - \kappa \right) \Bigg( \sum_{(i, j) \in S_{\mc{C}}^{\sw,\ex}} z_{ij}^{\sw} \nonumber\\
    +& \sum_{(i, j) \in S_{\mc{C}}^{\sw,\text{in}}} (1-z_{ij}^{\sw}) + \sum_{\ell \in S_{\mc{C}}^{\bl,\text{in}}} (1-z^{\bl}_{\ell}) \Bigg). \label{eq:inv_cut_upper_bound} 
\end{align}
\label{eq:valid_cuts_inv_linear}
\end{subequations}

\subsection{Theoretical Guarantees}
This subsection formally proves that the dynamically generated cutting planes strictly preserve the feasible region (from sec. \ref{sec:constraints}) and that the overall algorithm guarantees finite convergence to the global optimum.

\begin{theorem}[Validity of Cycle-Elimination Cuts] \label{thm:cycle_validity}
Let $C_{\text{cycle},c} \subseteq \mathcal{E}^{\mathrm{sw}}$ be a set of controllable edges that form a detected cycle $c$ in the candidate solution. The cutting-plane constraint 
in \eqref{eq:radiality_cut_multiple}
eliminates the infeasible cyclic configuration without excluding any valid radial topology.
\end{theorem}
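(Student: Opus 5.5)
We prove two claims: the cut \eqref{eq:radiality_cut_multiple} (i) removes the current candidate and (ii) is satisfied by every configuration that meets the radiality requirements \eqref{eq:radial_flow_conservation}--\eqref{eq:radial_directional_edge}. Both follow from one observation. A configuration $\boldsymbol{z}^{\sw}$ violates the inequality for cycle $c$ exactly when every switch in $C_{\text{cycle},c}$ is closed, since the left-hand side is an integer sum of binaries that exceeds $|C_{\text{cycle},c}|-1$ only if it equals $|C_{\text{cycle},c}|$.

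\textbf{Step 1 (the cut removes the candidate).} The cycle $c$ was detected in the candidate topology, so every edge of $C_{\text{cycle},c}$ is active there, i.e.\ $z^{\sw,(\tau)}_{ij}=1$ for all $(i,j)\in C_{\text{cycle},c}$. The left-hand side then equals $|C_{\text{cycle},c}| > |C_{\text{cycle},c}|-1$. Hence $\theta^{(\tau)}$ is infeasible for $\Omega^{(\tau+1)}$, and in fact so is every configuration that closes all of these switches simultaneously.

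\textbf{Step 2 (no valid radial topology is removed).} Take any $\boldsymbol{z}^{\sw}$ satisfying the radiality constraints, with each connected component spanned by a tree. Suppose, for contradiction, that it violates the cut. By the observation above, all switches of $C_{\text{cycle},c}$ are closed. If the cycle also uses non-controllable edges of $\mathcal{E}\setminus\mathcal{E}^{\sw}$, those edges are always present (equivalently, the detected cycle lives in the block-contracted graph), so the whole cycle $c$ is active in $\boldsymbol{z}^{\sw}$. All its nodes then lie in one component $\ell$, and the active edge set $\mathcal{E}_\ell$ contains a cycle.

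We then use the standard graph fact that a connected graph on $|\mathcal{N}_\ell|$ nodes with a cycle has at least $|\mathcal{N}_\ell|$ edges. This contradicts \eqref{eq:radial_spanning_tree}--\eqref{eq:radial_directional_edge}, which give exactly $\sum\beta_{ij}=|\mathcal{N}_\ell|-1$ active edges. To make this airtight, we argue that the flow constraints \eqref{eq:radial_flow_conservation}--\eqref{eq:radial_flow_limit} force connectivity: every $k$ receives a unit of flow from $i_r$ along arcs with $\lambda_{ij}=1$. A connected graph with $|\mathcal{N}_\ell|-1$ edges is a tree and so is acyclic, which is the contradiction. Therefore every radial configuration satisfies \eqref{eq:radiality_cut_multiple}.

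\textbf{Step 3 (many cuts, and the main obstacle).} Since each inequality for $c=1,\dots,C$ is valid by Step 2, their intersection is valid, and restricting to a fundamental cycle basis does not affect validity. The delicate point is the modelling subtlety in Step 2: the cut involves only controllable edges, while the cycle may pass through fixed edges inside base blocks. I would handle this by working on the graph whose nodes are base blocks, which are internally connected (and assumed radial) by definition, and whose edges are switches. Closing all switches of $C_{\text{cycle},c}$ then genuinely closes a cycle among the blocks, and hence in the original graph, because paths through the blocks reconnect the endpoints of consecutive switches.
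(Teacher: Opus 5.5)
Your proof is correct and takes essentially the same route as the paper: the cut is violated exactly when every switch of $C_{\text{cycle},c}$ is closed, which holds for the candidate and cannot hold for any radial configuration. The paper simply asserts the second point; you justify it (closing all the switches creates a genuine cycle through the intra-block paths, which contradicts the tree edge count $|\mathcal{N}_\ell|-1$ together with connectivity), so your version tightens the paper's argument rather than departing from it.
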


\begin{proof}
Consider a cycle $C_{\text{cycle},c}$ in which every switch $(i,j)\in C_{\text{cycle},c}$ is closed, implying
\[\sum_{(i,j)\in C_{\text{cycle},c}} z_{ij}^{\sw} \;=\; |C_{\text{cycle},c}|.\]
Such a configuration violates the acyclicity (radiality) requirement. To ensure that at least one switch in this cycle is opened, we enforce
\[
\sum_{(i,j)\in C_{\text{cycle},c}} z_{ij}^{\sw} \;\leqslant\; |C_{\text{cycle},c}| - 1.
\]
This inequality disallows any solution in which \textit{all} switches in $C_{\text{cycle},c}$ are set to $1$. Consequently, the cycle is broken, restoring radiality. Furthermore, this cut excludes only those solutions that contain a fully closed cycle, thereby leaving all already-feasible (radial) solutions unaffected. Hence, \eqref{eq:radiality_cut_multiple} is a valid cycle-elimination constraint.
\end{proof}

\begin{theorem}[Validity of Leader Allocation Cuts] \label{lem:leader_validity}
The generated cutting-plane constraints for leader allocation, as formulated in \eqref{eq:leader_req_nonlin}, correctly enforce the hierarchical requirements for active, internally connected components without eliminating any valid network configurations.
\end{theorem}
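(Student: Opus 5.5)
The proof splits on the binary indicator $\Phi(\mathcal{C})$. Since every factor in its definition is a binary variable (or one minus a binary variable), $\Phi(\mathcal{C})\in\{0,1\}$. Moreover, $\Phi(\mathcal{C})=1$ holds exactly when three conditions hold together: every external switch in $S_{\mathcal{C}}^{\mathrm{sw,ex}}$ is open, every internal switch in $S_{\mathcal{C}}^{\mathrm{sw,in}}$ is closed, and every block in $S_{\mathcal{C}}^{\mathrm{bl,in}}$ is active. That is precisely the situation in which $\mathcal{C}$ is realized as an isolated, internally connected, active CC, the case targeted by the leader violation detection step. I would establish this equivalence first, directly from the product form.

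\emph{Case $\Phi(\mathcal{C})=1$ (enforcement).} Substituting into \eqref{eq:lb_leader_req} gives $\sum_{g\in\mathcal{L}_\mathcal{C}} z_g^{\mathrm{ldr}}\geqslant 1$. Substituting into \eqref{eq:ub_leader_req} gives $\sum_{g\in\mathcal{L}_\mathcal{C}} z_g^{\mathrm{ldr}}\leqslant |\mathcal{L}_\mathcal{C}|+\kappa-|\mathcal{L}_\mathcal{C}|=\kappa$. These are exactly the required bounds $z^{\mathrm{CC}}_c\leqslant\sum z^{\mathrm{ldr}}_g\leqslant \kappa z^{\mathrm{CC}}_c$ with $z^{\mathrm{CC}}_c=1$. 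Any candidate that triggered the cut had a leader count of $0$ or more than $\kappa$ in this configuration, so that candidate is excluded.

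\emph{Case $\Phi(\mathcal{C})=0$ (no over-restriction).} The lower bound becomes $\sum z_g^{\mathrm{ldr}}\geqslant 0$, which is trivial. The upper bound becomes $\sum z_g^{\mathrm{ldr}}\leqslant|\mathcal{L}_\mathcal{C}|$, which also always holds because there are only $|\mathcal{L}_\mathcal{C}|$ binary leader variables. So whenever $\mathcal{C}$ is not an isolated active CC, the cut imposes nothing. Any valid configuration either satisfies $\Phi(\mathcal{C})=0$, and is untouched, or satisfies $\Phi(\mathcal{C})=1$, in which case feasibility under the full constraints of Section~\ref{sec:constraints} already gives $1\leqslant\sum z_g^{\mathrm{ldr}}\leqslant\kappa$ for that CC, and it is again untouched. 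Hence no valid configuration is eliminated.

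\emph{Main obstacle: the link between $\Phi(\mathcal{C})=1$ and the CC bound of the original model.} I must argue that when all external switches around $\mathcal{C}$ are open and all internal switches and blocks are active, $\mathcal{C}$ is exactly one element of $\mathcal{C}(\boldsymbol{z}^{\mathrm{sw}})$ with $z^{\mathrm{CC}}_c=1$. Then the coloring and flow constraints \eqref{eq:coloring_assignment}--\eqref{eq:block_leader_limit} imply the $[1,\kappa]$ bound for it. I would show this by invoking maximality of CCs, since no closed switch leaves $\mathcal{C}$, and internal connectivity, since $\mathcal{C}$ was detected as connected through its internal switches.

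Finally, I would check that the linear forms \eqref{eq:valid_cuts_inv_linear} agree with \eqref{eq:leader_req_nonlin} at all binary points. The right-hand side of \eqref{eq:inv_cut_lower_bound} is the standard lower bound on a product of binaries: it equals $1$ when all terms are $1$ and is at most $0$ otherwise. For \eqref{eq:inv_cut_upper_bound}, the parenthesized sum is $0$ exactly when $\Phi(\mathcal{C})=1$, which gives the bound $\kappa$. When the sum is at least $1$ and $|\mathcal{L}_\mathcal{C}|\geqslant\kappa$, the right-hand side is at least $|\mathcal{L}_\mathcal{C}|$, so the constraint is slack. If $|\mathcal{L}_\mathcal{C}|<\kappa$, the upper bound can never be violated and the cut is vacuous.
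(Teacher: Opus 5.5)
Your proposal is correct and follows the paper's own proof: the same case split on $\Phi(\mathcal{C})\in\{0,1\}$, with the cut reducing to $1\leqslant\sum_{g\in\mathcal{L}_\mathcal{C}} z_g^{\mathrm{ldr}}\leqslant\kappa$ when $\Phi(\mathcal{C})=1$ and to trivial bounds otherwise; your explicit check that valid configurations with $\Phi(\mathcal{C})=1$ already meet the bound spells out what the paper leaves implicit, and you can take that bound directly from the CC leader constraint of Section~\ref{sec:constraints} rather than deriving it from the coloring constraints. One caution on your side remark about the linearization (the paper's Lemma~\ref{lem:exact_linearization}): for $|\mathcal{L}_\mathcal{C}|<\kappa$ the linear cut \eqref{eq:inv_cut_upper_bound} is not vacuous but can exclude valid points, since its right-hand side drops below $|\mathcal{L}_\mathcal{C}|$ once the parenthesized sum reaches $2$, so the correct statement is that this cut is never generated in that case.
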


\begin{proof}
Suppose that the CC $\mathcal{C}$ is active and fully isolated, meaning that every block in $S_{\mathcal{C}}^{\bl,\text{in}}$ is active ($z_{\ell}^{\bl}=1$) and every external switch in $S_{\mathcal{C}}^{\sw,\ex}$ is open ($z_{ij}^{\sw}=0$), and that $\mathcal{C}$ is internally connected with every internal switch in $S_{\mathcal{C}}^{\sw,\text{in}}$ is closed ($z_{ij}^{\sw}=1$). Under these conditions, each factor in $\Phi(\mathcal{C})$ equals $1$, so $\Phi(\mathcal{C})=1$. Consequently, the constraints in \eqref{eq:leader_req_nonlin} simplify to
$
1 \leqslant \sum_{g \in \mathcal{L}_{\mathcal{C}}} z_{g}^{\mathrm{ldr}} \leqslant \kappa.
$
Thus, for an active, fully isolated, and internally connected CC, these inequalities enforce that the number of leaders is between $1$ and $\kappa$, as required.

In contrast, if $\mathcal{C}$ is not fully isolated, or is not internally connected, or contains an inactive block, then at least one factor in $\Phi(\mathcal{C})$ equals $0$, and hence $\Phi(\mathcal{C})=0$. In this case, the bounds in \eqref{eq:leader_req_nonlin} reduce to
$
0 \leqslant \sum_{g \in \mathcal{L}_{\mathcal{C}}} z_{g}^{\mathrm{ldr}} \leqslant |\mathcal{L}_{\mathcal{C}}|,
$
which is trivially satisfied since each $z_{g}^{\mathrm{ldr}}$ is binary. Therefore, these constraints are binding only for active, isolated, and internally connected CCs, and exclude only solutions that violate the corresponding leader requirements. This completes the proof.
\end{proof}

\begin{lemma}\label{lem:exact_linearization}
\textit{(Exactness of Linearization)} The linear cuts in \eqref{eq:valid_cuts_inv_linear} are exactly equivalent to the non-linear cuts in \eqref{eq:leader_req_nonlin}.
\end{lemma}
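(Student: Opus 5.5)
The plan is to prove the equivalence pointwise over the binary domain. Every variable appearing in \eqref{eq:leader_req_nonlin} and \eqref{eq:valid_cuts_inv_linear} is binary, and the cuts act only on binary candidates. So it suffices to show that, for every binary assignment of $(\boldsymbol{z}^{\sw},\boldsymbol{z}^{\bl},\boldsymbol{z}^{\mathrm{ldr}})$, the linear pair holds if and only if the nonlinear pair holds.

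The key device is the \emph{defect count}
\[
d(\mathcal{C}) := \sum_{(i,j)\in S_{\mathcal{C}}^{\sw,\ex}} z^{\sw}_{ij} + \sum_{(i,j)\in S_{\mathcal{C}}^{\sw,\text{in}}} (1-z^{\sw}_{ij}) + \sum_{\ell\in S_{\mathcal{C}}^{\bl,\text{in}}} (1-z^{\bl}_{\ell}) \in \mathbb{Z}_{\geqslant 0},
\]
which counts the factors of $\Phi(\mathcal{C})$ that equal $0$. Since a product of binaries equals $1$ exactly when no factor vanishes, we have $\Phi(\mathcal{C}) = 1$ if $d(\mathcal{C})=0$ and $\Phi(\mathcal{C})=0$ otherwise. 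Rewriting the right-hand side of \eqref{eq:inv_cut_lower_bound} with $|S_{\mathcal{C}}^{\sw,\ex}| + |S_{\mathcal{C}}^{\sw,\text{in}}| + |S_{\mathcal{C}}^{\bl,\text{in}}|$ absorbed gives exactly $1 - d(\mathcal{C})$. The right-hand side of \eqref{eq:inv_cut_upper_bound} is $\kappa + (|\mathcal{L}_{\mathcal{C}}|-\kappa)\, d(\mathcal{C})$. The proof then reduces to comparing these with the nonlinear right-hand sides in two cases, $d(\mathcal{C})=0$ and $d(\mathcal{C})\geqslant 1$, which mirror the two cases in the proof of Theorem~\ref{lem:leader_validity}.

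\emph{Lower bound.} If $d=0$, both right-hand sides equal $1$, so the constraints coincide. If $d\geqslant 1$, the linear right-hand side is $1-d\leqslant 0$ and the nonlinear one is $0$. Both are implied by $\sum_{g} z^{\mathrm{ldr}}_g \geqslant 0$, so both are trivially satisfied and again equivalent.

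\emph{Upper bound, case $d=0$.} Both right-hand sides equal $\kappa$.

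\emph{Upper bound, case $d\geqslant 1$ (main obstacle).} The nonlinear bound is $|\mathcal{L}_{\mathcal{C}}|$, which is trivially true. The linear bound $\kappa + (|\mathcal{L}_{\mathcal{C}}|-\kappa)d$ is at least $|\mathcal{L}_{\mathcal{C}}|$ precisely when $|\mathcal{L}_{\mathcal{C}}|\geqslant \kappa$, because then it is nondecreasing in $d$ and equals $|\mathcal{L}_{\mathcal{C}}|$ at $d=1$. If $|\mathcal{L}_{\mathcal{C}}|<\kappa$ and $d\geqslant 2$, the linear bound can drop below $|\mathcal{L}_{\mathcal{C}}|$, so equivalence would fail.

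I would resolve this by invoking the cut-generation rule. The upper cut is produced only when a violation $\sum_{g\in\mathcal{L}_{\mathcal{C}}} z^{\mathrm{ldr}}_g > \kappa$ is detected, and this forces $|\mathcal{L}_{\mathcal{C}}| > \kappa$. When $|\mathcal{L}_{\mathcal{C}}|\leqslant\kappa$, the nonlinear upper cut reduces to $\sum_g z^{\mathrm{ldr}}_g\leqslant |\mathcal{L}_{\mathcal{C}}|$ (trivial) regardless of $\Phi$, so it is redundant and is never added. Equivalently, one may state that \eqref{eq:inv_cut_upper_bound} is imposed only for $|\mathcal{L}_{\mathcal{C}}|\geqslant\kappa$.

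Under this restriction, the two cases show that on every binary point the linear and nonlinear cuts define the same feasible set. This establishes the lemma with no auxiliary variables.
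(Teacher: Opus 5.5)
Your proof is correct and follows the paper's argument essentially step for step. Your defect count $d(\mathcal{C})$ is the paper's $V$, and $1-d(\mathcal{C})$ is its $N-n_{\mathcal{C}}+1$; you use the same case split on $\Phi(\mathcal{C})\in\{0,1\}$ over binary points, and you close the upper-bound case with the same appeal to the cut-generation rule forcing $|\mathcal{L}_{\mathcal{C}}|>\kappa$. Your explicit observation that the linear upper cut becomes binding, even infeasible, when $|\mathcal{L}_{\mathcal{C}}|<\kappa$ and $d(\mathcal{C})\geqslant 2$ makes clear why that restriction is needed, which the paper only uses implicitly.
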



\begin{proof}
For the lower bound \eqref{eq:inv_cut_lower_bound}, let $N = \sum_{(i, j) \in S_{\mc{C}}^{\sw,\ex}} (1-z^{\sw}_{ij}) + \sum_{(i,j) \in S_{\mc{C}}^{\sw,\text{in}}} z^{\sw}_{ij} + \sum_{\ell \in S_{\mc{C}}^{\bl,\text{in}}} z^{\bl}_{\ell}$ and $n_{\mc C} = |S_{\mc{C}}^{\sw,\ex}| + |S_{\mc{C}}^{\sw,\text{in}}| + |S_{\mc{C}}^{\bl,\text{in}}|$. Since all variables are binary, $N = n_{\mc C}$ if and only if every factor in $\Phi(\mc C)$ equals $1$; otherwise $N \leqslant n_{\mc C}-1$. Hence the right-hand side of \eqref{eq:inv_cut_lower_bound} equals $1$ when $\Phi(\mc C)=1$, and is at most $0$ when $\Phi(\mc C)=0$. Therefore, \eqref{eq:inv_cut_lower_bound} becomes $\sum_{g \in \mc L_{\mc C}} z_g^{\mathrm{ldr}} \geqslant 1$ when $\Phi(\mc C)=1$, and is non-binding when $\Phi(\mc C)=0$. Thus, \eqref{eq:inv_cut_lower_bound} is exactly equivalent to \eqref{eq:lb_leader_req} for binary assignments.

For the upper bound \eqref{eq:inv_cut_upper_bound}, let $V = \sum_{(i, j) \in S_{\mc{C}}^{\sw,\ex}} z_{ij}^{\sw} + \sum_{(i, j) \in S_{\mc{C}}^{\sw,\text{in}}} (1-z_{ij}^{\sw}) + \sum_{\ell \in S_{\mc{C}}^{\bl,\text{in}}} (1-z^{\bl}_{\ell})$. Since all variables are binary, $V = 0$ if and only if $\Phi(\mathcal{C}) = 1$; otherwise $V \geqslant 1$. The cut is generated only when the incumbent violates the upper leader bound, so $|\mathcal{L}_\mathcal{C}| > \kappa$. When $\Phi(\mathcal{C}) = 1$, the RHS reduces to $\kappa$, enforcing $\sum_{g \in \mathcal{L}_\mathcal{C}} z_g^{\mathrm{ldr}} \leqslant \kappa$ as required. When $\Phi(\mathcal{C}) = 0$, we have $V \geqslant 1$ and hence $\kappa + (|\mathcal{L}_\mathcal{C}| - \kappa)V \geqslant |\mathcal{L}_\mathcal{C}|$, so the constraint is non-binding since $\sum_{g \in \mathcal{L}_\mathcal{C}} z_g^{\mathrm{ldr}} \leqslant |\mathcal{L}_\mathcal{C}|$ always holds for binary variables. Thus \eqref{eq:inv_cut_upper_bound} is exactly equivalent to \eqref{eq:ub_leader_req}.
\end{proof}

\begin{theorem}[Finite Convergence and Global Optimality] \label{thm:convergence}
Algorithm \ref{alg:cutting_plane} converges to the global optimal solution of the monolithic network partitioning problem \eqref{eq-OptimalNetworkPartitioningProblem} in a finite number of iterations, provided that a feasible solution exists.
\end{theorem}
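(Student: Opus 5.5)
The argument has three parts: the relaxations never cut off a feasible point of \eqref{eq-OptimalNetworkPartitioningProblem}; every non-terminating iteration removes a distinct integer configuration; and the first candidate that satisfies $\Xi$ is globally optimal.

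\textbf{Step 1 (outer approximation).} I would show by induction on $\tau$ that the feasible set $F$ of \eqref{eq-OptimalNetworkPartitioningProblem} is contained in the feasible set $F^{(\tau)}$ of $\Omega^{(\tau)}$. The base case holds because $\Xi_{\text{relax}}\subseteq\Xi$. For the inductive step, every cut in $\Lambda^{(\tau)}$ is either a cycle cut \eqref{eq:radiality_cut_multiple} or a linearized leader cut \eqref{eq:valid_cuts_inv_linear}. Theorem~\ref{thm:cycle_validity} shows that cycle cuts do not exclude any radial topology. Theorem~\ref{lem:leader_validity} together with Lemma~\ref{lem:exact_linearization} shows that leader cuts do not exclude any configuration meeting the leader bounds. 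Hence $F\subseteq F^{(\tau+1)}$, and so $\mathcal{O}(\theta^{(\tau)})\le \mathcal{O}^*$ for every $\tau$.

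\textbf{Step 2 (termination implies optimality).} If the algorithm returns $\theta^{(\tau)}$, then $\theta^{(\tau)}\in F$ by the stopping test. Combined with the lower bound from Step 1, this gives $\mathcal{O}(\theta^{(\tau)})=\mathcal{O}^*$, so $\theta^{(\tau)}$ is globally optimal.

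\textbf{Step 3 (finiteness).} The cuts involve only the binary vector $\boldsymbol{z}=(\boldsymbol{z}^{\sw},\boldsymbol{z}^{\bl},\boldsymbol{z}^{\mathrm{ldr}})$, which takes at most $2^{|\mathcal{E}^{\sw}|+|\mathcal{B}|+|\mathcal{P}|}$ values.

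First I need that whenever $\theta^{(\tau)}\notin F$, some generated cut is violated by its binary part $\boldsymbol{z}^{(\tau)}$. For a cycle, $\sum z^{\sw}=|C_{\text{cycle},c}|$ holds at $\boldsymbol{z}^{(\tau)}$, which violates \eqref{eq:radiality_cut_multiple}. For a leader violation, $\Phi(\mathcal{C})=1$ at $\boldsymbol{z}^{(\tau)}$ while the leader count lies outside $[1,\kappa]$, which violates \eqref{eq:leader_req_nonlin}.

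Since cuts are never removed, every later iterate has a binary part different from $\boldsymbol{z}^{(\tau)}$. Distinct iterations therefore produce distinct binary vectors, and the number of iterations is bounded by the number of binary assignments. When the algorithm terminates, feasibility of the problem (assumed) together with Step 1 guarantees that $\Omega^{(\tau)}$ remains feasible at every iteration. Termination thus occurs at a point of $F$ rather than at an infeasible relaxation.

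\textbf{Main obstacle.} The weak point is the claim that "$\theta^{(\tau)}$ violates $\Xi$" always produces a cut on the binary variables. The full constraint set $\Xi$ includes the radiality flows \eqref{eq:radial_flow_conservation}--\eqref{eq:radial_directional_edge} and the coloring constraints, which are deferred. I must argue that, for fixed binary $\boldsymbol{z}$ that is acyclic and satisfies the leader bounds on every isolated active CC, the auxiliary variables ($f,\lambda,y,\eta,\xi$) can always be completed feasibly. The completion would take an orientation of each spanning tree for $\lambda,f$, and color each closed switch by the unique leader-containing block of its CC for $y$. With that, detection via DFS/union-find and the leader check is exactly equivalent to membership in $F$ projected onto $\boldsymbol{z}$, and the continuous part is already enforced in $\Xi_{\text{relax}}$.

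A second subtlety is that restricting to a fundamental cycle basis still yields at least one violated cycle cut whenever a cycle exists. This holds because any graph containing a cycle has a nonempty basis, each of whose cycles is fully closed.
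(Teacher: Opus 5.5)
Your proposal is correct and follows the same route as the paper's proof. Each relaxation gives a lower bound; Theorems~\ref{thm:cycle_validity} and~\ref{lem:leader_validity}, with Lemma~\ref{lem:exact_linearization}, guarantee that no feasible point is cut off while the current infeasible candidate is; and finiteness of the binary space forces termination at a feasible, hence optimal, point. Your version is also more careful than the paper's: you count all of $(\boldsymbol{z}^{\sw},\boldsymbol{z}^{\bl},\boldsymbol{z}^{\mathrm{ldr}})$ rather than only $2^{|\mathcal{E}^{\sw}|}$ switch configurations, and you flag explicitly that the deferred auxiliary variables must admit a feasible completion, which the paper takes for granted. One caveat: your coloring completion via a ``unique'' leader-containing block presumes $\kappa=1$.
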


\begin{proof}
Let $\mathcal{Z}$ denote the discrete search space of all binary decision variables, primarily governed by the status of the controllable edges $\boldsymbol{z}^{\mathrm{sw}} \in \{0,1\}^{|\mathcal{E}^{\mathrm{sw}}|}$. Because the number of controllable edges $|\mathcal{E}^{\mathrm{sw}}|$ is finite, the total number of possible network configurations is bounded by $2^{|\mathcal{E}^{\mathrm{sw}}|}$, meaning $\mathcal{Z}$ is strictly finite.

At any iteration $\tau$, the relaxed problem $\Omega^{(\tau)}$ is subject to a constraint set $\Xi^{(\tau)} \subseteq \Xi$. Because $\Omega^{(\tau)}$ is a relaxation, its optimal objective value provides a valid lower bound to the monolithic problem $\Omega$. Let $\theta^{(\tau)}$ be the \textit{integer-feasible} candidate solution obtained from solving $\Omega^{(\tau)}$ to optimality. If $\theta^{(\tau)}$ satisfies all constraints in the full set $\Xi$, it is strictly feasible for $\Omega$. Given that $\Omega^{(\tau)}$ is a relaxation, $\theta^{(\tau)}$ must therefore be the global optimum for $\Omega$.

If $\theta^{(\tau)} \notin \Xi$, the algorithm identifies the specific violations (e.g., cycles or invalid leader assignments) and generates exact cutting planes $\Lambda^{(\tau)}$. As established in Theorem \ref{thm:cycle_validity} and Theorem \ref{lem:leader_validity}, these cuts strictly eliminate the current infeasible assignment $\theta^{(\tau)}$ without removing any fully feasible configurations from the search space. 

Because the search space $\mathcal{Z}$ is finite, and the algorithm permanently excludes at least one unique infeasible configuration at each iteration without clipping the true feasible region, it cannot cycle infinitely. Therefore, the sequence of relaxations monotonically tightens, and the algorithm is guaranteed to terminate in a finite number of steps, yielding the global optimal solution.
\end{proof}

\section{Numerical Results} \label{sec:results}
This section presents simulation studies in power distribution systems to evaluate the proposed cutting-plane framework. We consider a contingency scenario where the main grid connection is lost, requiring the network to transition into one or more self-sustained microgrids (see \cite{moring2024robust}). 


\subsection{Case study}
\label{subsec:case_study}
The Iowa 240-bus system \cite{bu2019time} used in this study is an unbalanced, three-phase distribution feeder with strategically placed switchable lines. The original feeder has 240 buses: 130 single-phase, 1 two-phase, and 109 three-phase and $9$ controllable switches. We disable battery storage and add $37$ additional dispatchable switches (with no other network modifications), yielding up to $46$ controllable switches; the reported studies use the $26$, $36$, and $46$ switch cases. The network hosts $5$ DERs with a total generation capacity of $2300\,\text{kW}$, unevenly distributed to reflect practical resource allocation. Additionally, $192$ loads are in the network across nodes and phases, representing phase imbalance typical of real-world feeders, with an aggregate nominal load of $1167.3\,\text{kW}$. A disruptive event at the upstream substation isolates the feeder from the main grid, forcing the feeder to operate in islanded mode, potentially partitioning into multiple microgrids controlled by the proposed reconfiguration strategy. Demand uncertainty is modeled using a box uncertainty set of $\pm 20\%$ around nominal load values, and $250$ random demand realizations are sampled within this set.

The objective balances load shedding and generation costs for serving energized demand. Let $\nu$ weight load shedding, with generation weighted by $1-\nu$. To uphold standard protection practices, the feeder is constrained to be radial, and each energized sub-network must include exactly one grid-forming DER ($\kappa=1$), with others in grid-following mode. Block priorities are weighted as $\alpha_{\ell}=\gamma|\mathcal{D}_{\ell}|$, assigning higher cost to shedding larger load blocks.

\subsection{Simulation Setup}
As discussed earlier, explicitly enforcing radiality and grid-forming constraints introduces many auxiliary variables and flow constraints, causing the complexity of finding a feasible topology to scale exponentially with the number of controllable edges. For each sampled demand realization, the optimization also enforces linearized three-phase \textit{LinDist3Flow} power-flow constraints \cite{fobes2022optimal}, along with radiality and grid-forming constraints. To demonstrate the effectiveness of the proposed approach, we solve the network reconfiguration problem under the following scenarios:
\begin{list}{$\bullet$}{\leftmargin=0.5em \itemindent=0pt \itemsep=2pt}
    \item \textbf{Full-MIP}: MIP formulation with full radiality and grid-forming 
    constraints (sec.~\ref{sec:constraints}).
    \item \textbf{CP-Radial}: Cutting-plane method enforcing radiality 
    (sec.~\ref{subsec:cycle_cuts}) with full grid-forming MIP constraints 
    (sec.~\ref{sec:constraints}).
    \item \textbf{CP-GF}: Cutting-plane method enforcing grid-forming 
    (sec.~\ref{subsec:gf_cuts}) with full radiality MIP constraints 
    (sec.~\ref{sec:constraints}).
    \item \textbf{CP-Radial+GF}: Cutting-plane method enforcing both radiality 
    and grid-forming constraints (sec.~\ref{subsec:cycle_cuts}, 
    \ref{subsec:gf_cuts}).
\end{list}
\noindent
\textbf{Computational setup:} All formulations, including Full-MIP and the proposed cutting-plane variants (CP-Radial, CP-GF, and CP-Radial+GF), are implemented within PowerModelsONM.jl~\cite{fobes2022optimal}, available at \url{https://github.com/lanl-ansi/PowerModelsONM.jl}. All experiments are conducted on a MacBook with an M4 Max chip (14 cores, 36\,GB memory). The MIP formulations are solved using Gurobi v13.0 with default solver settings, interfaced via JuMP v1.23.0 in Julia v1.12.5. The cutting plane variants are implemented using Gurobi's lazy constraint callback feature.

\subsection{Performance of Proposed Approach}
\subsubsection{Cut Statistics Across Increasing Switch Configurations}
\begin{table}[t]
\vspace{.3cm}
\centering
{\setlength{\tabcolsep}{4pt}
\begin{tabular}{l|l|rrr|rrr}
\hline
\multirow{2}{*}{\shortstack{\#\\switches}} & \multirow{2}{*}{Method} & \multicolumn{3}{c|}{\textbf{Radial Cuts}} & \multicolumn{3}{c}{\textbf{GF Cuts}} \\
& & Avg. & Min. & Max. & Avg. & Min. & Max. \\
\hline
\multirow{3}{*}{26} & CP-Radial      & 2.1 & 0 & 6 & 0 & 0 & 0 \\
                    & CP-GF          &   0 & 0 & 0 & 57.2 & 0 & 372 \\
                    & CP-Radial+GF   & 3.5 & 0 & 6 & 38.5 & 35 & 176 \\
\hline
\multirow{3}{*}{36} & CP-Radial      & 2.5 & 0 & 6 & 0 & 0 & 0 \\
                    & CP-GF          &   0 & 0 & 0 & 58.2 & 0 & 470 \\
                    & CP-Radial+GF   & 3.9 & 0 & 6 & 12.2 & 0 & 238 \\
\hline
\multirow{3}{*}{46} & CP-Radial      & 3.1 & 0 & 6 & 0 & 0 & 0 \\
                    & CP-GF          &   0 & 0 & 0 & 85.7 & 0 & 808 \\
                    & CP-Radial+GF   & 5.5 & 1 & 6 & 1.0 & 0 & 155 \\
\hline
\end{tabular}
}
\caption{Radial and grid-forming (GF) cut statistics across 250 load parameter 
realizations for each cutting plane method and switch configuration.}
\label{tab:iowa240-cut-stats}
\end{table}
Table~\ref{tab:iowa240-cut-stats} highlights two key observations. First, CP-Radial generates very few radiality cuts (avg. 2.1--3.1), suggesting that the GFM coloring MIP constraints already restrict the feasible region sufficiently such that radiality violations are rarely encountered during branch-and-cut. Second, CP-GF alone requires a substantially larger number of GF cuts (avg. 57.2--85.7, max up to 808), yet CP-Radial+GF generates far fewer GF cuts than CP-GF in isolation -- a gap that widens significantly with increasing switching combinatorics (1.0 vs. 85.7 avg. GF cuts at 46 switches). This indicates that the radiality MIP constraints in CP-Radial+GF actively tighten the search space that steers the solver toward GF-feasible solutions, substantially reducing the burden on the inclusion of GF cuts. This structural interaction between the two constraint families underlies the solve time advantage of CP-Radial+GF, as discussed in the next sub-section.

\subsubsection{Solve-time Efficiency Under Switching 
Flexibility} 
Figure~\ref{fig:solve_times_iowa240} reports solve-time distributions (in log-scale) across 250 random demand realizations for each formulation under a fixed substation contingency. Median solve times for Full-MIP increase from 3.21\,s at 26 switches to 9.99\,s at 46 switches (3.12$\times$; worst-case: 5.42\,s and 13.98\,s, respectively), consistent with combinatorial growth in the binary feasibility space as switching flexibility increases. CP-GF exhibits nearly identical scaling (3.14$\times$, 1.54\,s to 4.82\,s), indicating that with radiality retained as commodity-flow constraints, radial-feasibility combinatorics remain the dominant bottleneck. CP-Radial partially alleviates this, achieving 2.20$\times$ growth (2.15\,s to 4.71\,s), while requiring only 2.1--3.1 radiality cuts on average across switch configurations. By contrast, CP-Radial+GF shows near-flat empirical scaling of 1.27$\times$ (0.137\,s to 0.174\,s), \textit{with median speedups of 23.4$\times$, 26.1$\times$, and 57.5$\times$ over Full-MIP at 26, 36, and 46 switches, respectively; at the 95th percentile, speedup reaches 64$\times$ at 46 switches}. These results indicate that decomposing only one constraint family is useful but insufficient for robust scalability, whereas jointly decomposing radiality and GFM coloring constraints removes the dominant combinatorial scaling bottleneck.
\begin{figure}[h]
\vspace{-0.2cm}
    \centering
    \includegraphics[width=1\linewidth]{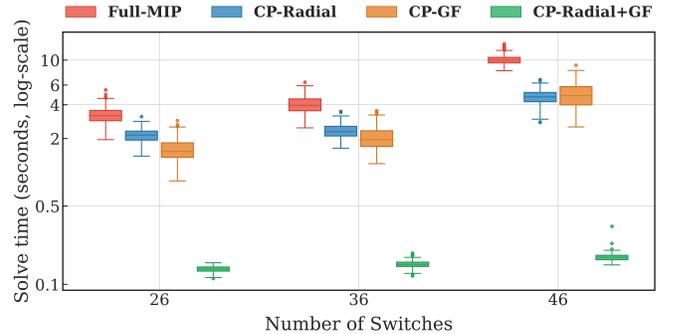}
    \caption{Solve-time distributions for the Iowa-240 network under a substation contingency across 250 random demand realizations and varying dispatchable switch counts. CP-Radial+GF achieves substantially lower solve times than Full-MIP and single-family cut variants, with superior empirical scaling as switching flexibility increases.}
    \label{fig:solve_times_iowa240}
    \vspace{-0.1cm}
\end{figure}

\subsubsection{Optimal Post-Contingency Reconfiguration}
As shown in Figure~\ref{fig:IOWA_240_Colored}, under the nominal-load Iowa 240-bus case with 46 controllable switches and a substation contingency, the optimal solution energizes 21 out of 44 load blocks, yielding a radial post-contingency topology. The reconfiguration closes 20 of the 46 controllable switch edges and activates one normally-open tie line, \texttt{cb\_303}, to reroute power around the outaged substation-supplied area. A total of 353.9\,kW of generation is dispatched to serve the energized blocks, satisfying 353.9\,kW out of 1167.3\,kW of total network demand (30.3\%), with the remaining 813.4\,kW (69.7\%) shed across 23 blocks. Substantial load shedding reflects the severity of the substation contingency, eliminating the dominant generation source from the network and forces the reconfiguration to operate under significant resource constraints.

\begin{figure}
    \vspace{.2cm}
    \centering
    \includegraphics[width=1.0 \linewidth]{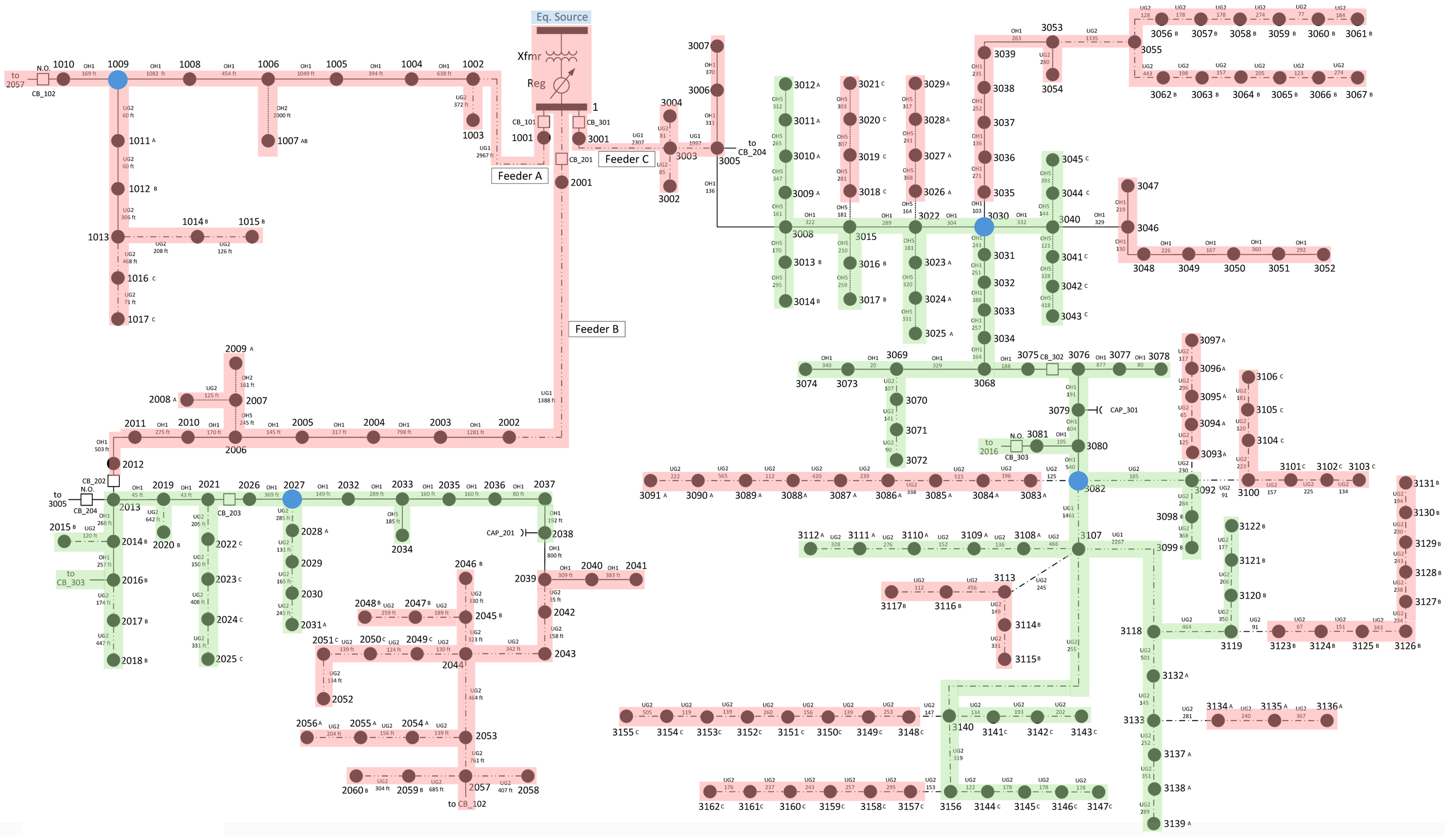}
     \caption{Optimal post-contingency reconfiguration of the Iowa 240-bus system. Energized load blocks are highlighted in green, while de-energized (shed) blocks are shown in red. Blue dots indicate DER locations.}
    \label{fig:IOWA_240_Colored}
\end{figure}

\begin{table}[htbp]
\centering
\begin{tabular}{c|ccc}
\hline
\# Controllable Switches & 26 & 36 & 46 \\
\hline
Load Blocks Energized     & 11/24 & 14/34 & 21/44 \\
Load Served (kW)          & 342.1 & 347.5 & 353.9 \\
\textbf{\% Load Served} & \textbf{29.30\%} & \textbf{29.77\%} & \textbf{30.32\%} \\
\hline
\end{tabular}
\caption{Post-contingency load recovery across switch configurations for Iowa 240 system under nominal demand.}
\label{tab:iowa240-load-served}
\end{table}
Table~\ref{tab:iowa240-load-served} shows that increasing switching flexibility improves post-contingency restoration, with served demand monotonically rising from 29.30\% (26 switches) to 30.32\% (46 switches). While the gains are modest, they reflect limited generation capacity under the substation contingency. However, increasing the switch set significantly raises the combinatorial complexity of the Full MIP, leading to higher solve times (see Fig.~\ref{fig:solve_times_iowa240}). The proposed cutting-plane framework directly addresses this bottleneck, enabling the resilience gains without prohibitive runtime growth.

In summary, the results show that the proposed iterative cutting-plane approach, particularly \textbf{CP-Radial+GF}, significantly accelerates resilient distribution grid reconfiguration under contingencies compared to the Full-MIP, improving the practicality of large-scale network resilience operations. A natural next step is extending the framework to integrated transmission–distribution power networks, where distribution reconfiguration is coupled with transmission constraints, further increasing complexity and emphasizing the need for proposed scalable algorithms.

\section{Conclusions}
\label{sec:conc}
This paper introduced a cutting-plane framework for efficient graph partitioning under resource constraints, with application to optimal distribution grid reconfiguration under contingency. We investigated key constraints in network partitioning, including radiality, grid-forming (GFM) coloring, and resource balance constraints, and formulated them as a comprehensive mixed-integer programming (MIP) model. Given the computational intractability of solving the full MIP for large-scale networks, we developed novel valid cuts to iteratively enforce these constraints via a lazy callback cutting-plane approach, \textit{significantly reducing computational complexity while preserving solution optimality}. Numerical experiments on the Iowa 240-bus system under a substation contingency validate the accuracy and efficiency of the proposed framework across varying numbers of dispatchable switches. An ablation study reveals that jointly decomposing both radiality and GFM coloring constraints into cutting planes is critical for scalable graph partitioning---single-family cut variants yield only partial scalability gains, whereas joint GFM+radiality cuts achieve near-flat empirical scaling with median speedups of up to 57$\times$ and tail speedups exceeding 64$\times$ over full MIP formulations. 

\bibliographystyle{IEEEtran}
\bibliography{references}

\end{document}